\newtheorem{thm}{Theorem}[section]
\newtheorem{cor}[thm]{Corollary}
\newtheorem{lem}[thm]{Lemma}
\theoremstyle{definition}
\theoremstyle{remark}
\newtheorem{rem}{Remark}[section]
\numberwithin{equation}{section}
\newtheoremstyle{example}{\topsep}{\topsep}%
     {}
     {}
     {\bfseries}
     {}
     {\newline}
     {\thmname{#1}\thmnumber{ #2}\thmnote{ #3}}
\newtheorem{ex}{\bfseries \emph{Example}}[section]
\begin{document}

\title{Decomposition of Integral Self-Affine Multi-Tiles
\footnotetext{School of Mathematics and Statistics, Central China Normal University, P. R. China.}
\footnotetext{xiaoyefu@mail.ccnu.edu.cn}
 \footnotetext{Department of Mathematics and Statistics,
McMaster University, Hamilton, Ontario, L8S 4K1, Canada.}
\footnotetext{gabardo@mcmaster.ca}
\footnotetext{The research is supported in part by the NSFC 11401205}}

\author{Xiaoye Fu and Jean-Pierre Gabardo}

\date{}
\maketitle


{\bf Abstract.} In this paper, we propose a method to decompose  an integral self-affine ${\mathbb Z}^n$-tiling set
$K$ into measure disjoint pieces $K_j$ satisfying
$K=\displaystyle\bigcup K_j$ in such a way that the collection of sets $K_j$ forms an integral self-affine
collection associated with the matrix $B$ and this with a minimum number of pieces $K_j$.
When used on a given measurable $\mathbb{Z}^n$-tiling
set $K\subset\mathbb{R}^n$, this decomposition terminates after finitely many
steps if and only if the set $K$ is an integral self-affine multi-tile. Furthermore,
we show that the minimal decomposition we provide is unique.

{\bf Key words.} self-affine collection, tiling sets, self-affine multi-tiles

{\bf AMS subject classifications.} 52C22, 28A80

\section{Introduction}

{\hskip 0.7cm}  In the relevant literature, it has been shown that
integral self-affine (multi-)tiles
 can be used to construct Haar-type wavelet and multiwavelet basis.
 They also arise in the construction of certain compactly supported wavelets and multiwavelets
in the frequency domain, with the representations of their set-valued equations
playing an essential role in this procedure (see \cite{BS02,GH, FG,GYU}).

 The set-valued equation of an integral self-affine tile is
uniquely determined. However, for an integral self-affine multi-tile,
the set-valued equation defining it may not be unique as shown in \cite{GHA}.
Suppose that $B\in M_n(\mathbb{Z})$, the set $n\times n$ matrices with integer entries,
and that $B$ is expansive. i.e.~all of its eigenvalues have moduli greater than one.
For a given measurable set $K\subset \mathbb{R}^n$,
we can determine whether or not $K$ is an integral self-affine tile by
checking if it satisfies a set-valued
equation. However, this is not the case for integral self-affine
multi-tiles since we would need to know a priori  a measure disjoint partition of $K$
to make this determination. This yields a natural question:

\

 Given a measurable set $K\subset \mathbb{R}^n$ and an associated expansive matrix
$B\in M_n(\mathbb{Z})$, how can we verify whether or not $K$ is an integral
self-affine multi-tile associated with $B$? In the case of a positive answer,
the set $K$ could then be  used  for constructing
associated Haar-type wavelets.

\

We first recall the definition of  integral self-affine multi-tiles
associated with a given  expansive matrix with integral entries.

\bigskip

A finite collection of sets $ K_i\subseteq
\mathbb{R}^n, 1\le i \le M$, that are either
compact or empty, is said to be {\it an integral self-affine collection} if there is an expansive
 matrix $B\in M_n(\mathbb{Z})$ and finite (possibly
empty) sets $\Gamma_{ij}\subseteq\mathbb{Z}^n, \ i,j=1,\dotso,M,$ such that
\begin{equation}\label{e-1-1}
BK_i=\displaystyle\bigcup_{j=1}^{M}(\Gamma_{ij}+K_j)\  \text{for} \
i=1,\dotsc,M,
\end{equation}
and for any $i,j,k\in\{1,\dotsc,M\},$
\begin{equation}\label{e-1-2}
(\beta+K_i)\cap(\gamma+K_j)\cong\emptyset\ \text{for} \
\beta\in\Gamma_{ki}, \ \gamma\in\Gamma_{kj} \ \text{and} \  i\ne j \ \text{or} \
\beta\ne\gamma,
\end{equation}
where for two measurable sets $E, F\subseteq\mathbb{R}^n$, $E \cong F$ means that the symmetric difference $(F\setminus E)\bigcup (E\setminus F)$ has zero Lebesgue measure. The set
$\Gamma:=\lbrace\Gamma_{ij}\rbrace_{1\le i,j\le M}$ is called {\it a collection of digit sets} and it is called {\it a standard collection of digit
sets} if for each
$j\in\{1,\dotsc,M\},$ $\mathcal{D}_j:=\displaystyle\bigcup_{i=1}^{M}\Gamma_{ij}$ is a
complete set of coset representatives for the group
$\mathbb{Z}^n/B\mathbb{Z}^n$ \cite{GHA,GM,LWN}.

\bigskip

A finite collection of sets $ \{K_i\subseteq
\mathbb{R}^n, 1\le i \le M\}$ is said to {\it $\Lambda$-tile $\mathbb{R}^n$} or to be {\it a $\Lambda$-tiling set} (see \cite{GHA}), if
$
K_i \cap K_j \cong \emptyset \ \text{for} \ i \ne j \in \{1, \dotsc, M\},
$
and for $K:= \bigcup\limits_{i=1}^M K_i$, we have
\begin{itemize}
 \item  $\bigcup_{\ell\in\Lambda}(\ell+K) \cong \mathbb{R}^n$,
 \item  $(K+\ell_1)\cap(K+\ell_2) \cong \emptyset \ \rm{for} \  \ell_1,\ell_2 \in\Lambda \ \rm{and} \ \ell_1\ne\ell_2$.
\end{itemize}

If an integral self-affine collection $\lbrace K_i\subset\mathbb{R}^n, 1\le i\le M\rbrace$ $\Lambda$-tiles $\mathbb{R}^n$,
 we call $K : = \bigcup\limits_{i=1}^M K_i$ {\it an integral self-affine $\Lambda$-tiling set
 with $M$ prototiles}, or {\it an integral self-affine multi-tile} for short.
 $K$ is called an integral self-affine tile if $M=1$.
It is known that (\ref{e-1-1}) has exactly one solution \cite{H} if $M=1$,
i.e.~$K\subset\mathbb{R}^n$ is uniquely determined by the
associated $n\times n$ expansive matrix $B\in M_n(\mathbb{Z})$ and
the digit set $\Gamma$ and, conversely, if $B$ and $K$ are fixed,
the set of digits satisfying (\ref{e-1-1}) is unique.
The geometry, topological structure and
tiling property of
 self-affine tiles \cite{F, K,LR,LLR1,LLR13,LRT,HL,LWG,LWN,LWA} and the connections to wavelet theory  have been studied quite extensively (\cite{GHS, FG, GYU, GM, LW}).

\bigskip

In contrast, if $M>1$ and   $B$ and $K$ are fixed, there may exist several sets of digits
for which Equation (\ref{e-1-1}) holds.
The  study of self-affine multi-tiles is not as well developed
because of their complicated structure.
 For a detailed study of the general solutions of
(\ref{e-1-1}) and  the relationship between multi-wavelets
and the theory of integral self-affine multi-tiles, we  refer the reader to
\cite{GHA, FW, LW03}. As another application to wavelet theory,
we considered the problem of constructing wavelet sets using
integral self-affine multi-tiles in the frequency domain \cite{FG}.
The wavelet sets constructed in \cite{FG} depends in an essential way on the structure of
 the associated integral self-affine multi-tiles. The decomposition (or the representation)
of self-affine multi-tiles allowed us to obtain rather explicit descriptions
of  the wavelet sets we constructed there.  This motivated us to consider
the problem of decomposing a measurable set into a  self-affine collection
in the best possible way if it is an integral self-affine multi-tile.

\bigskip

It is known that an integral self-affine multi-tile must be
a $\mathbb{Z}^n$-tiling set if it can be used to construct a Haar-type multiwavelet \cite{GHA, FW}.
Thus, we will restrict our attention to $\mathbb{Z}^n$-tiling sets and
we will consider the problem of representing a $\mathbb{Z}^n$-tiling set
which is an integral self-affine multi-tile as the union of an integral
self-affine collection with the minimal number of prototiles,
since, as we mentioned before, the representation of an integral self-affine multi-tile is not
necessarily unique.
For example, in dimension one, if $K=[-\frac{1}{4},\frac{3}{4}]$ associated with $B=2$,
then $K$ is not only an integral self-affine $\mathbb{Z}$-tiling set with 3 prototiles,
 but also an integral self-affine $\mathbb{Z}$-tiling set with 4 prototiles (see section 3). The main goal
of this paper is to provide a method to decompose an integral self-affine multi-tile $K$ which is a
$\mathbb{Z}^n$-tiling set associated with some expansive matrix $B\in M_n(\mathbb{Z})$
into distinct measure disjoint
pieces $K_j$  such that the collection of sets
$K_j$ is an integral self-affine collection associated with
$B$ and the number of prototiles is minimal. Moreover, this
minimal representation will be shown to be unique. We will show that the proposed
decomposition will terminate in finitely many steps only when the measurable $\mathbb{Z}^n$-tiling set
to which it is applied
 is in fact  an integral self-affine multi-tile associated with the
expansive matrix $B\in M_n(\mathbb{Z})$.

\bigskip

The paper is organized as follows. In Section 2, we provide a method to represent a
$\mathbb{Z}^n$-tiling set which is an integral self-affine multi-tile as a
 union of prototiles with the least number of prototiles.
In section 3, we give some examples to
illustrate our method given in Section 2. Moreover, we construct some examples
showing that some wavelet sets cannot be constructed by the method
in \cite{FG} using integral self-affine multi-tiles.

\section{The representation of integral self-affine multi-tiles}

{\hskip 0.7cm}
In the following, $\lvert K\rvert$ denotes the Lebesgue measure of a measurable set $K\subset\mathbb{R}^n$. We will assume that $K:=\displaystyle\bigcup_{i=1}^{M}K_i$ with $\lvert K_i\rvert>0$ is an integral self-affine ${\mathbb Z}^n$-tiling set with $M$ prototiles satisfying (\ref{e-1-1}) and (\ref{e-1-2}). Moreover, we assume here that for each $j\in S$, $\mathcal{D}_j:=\displaystyle\bigcup_{i=1}^{M}\Gamma_{ij}$ is a complete set of coset representatives for $\mathbb{Z}^n/B\mathbb{Z}^n$. This condition is known to be necessary in order for an integral self-affine multi-tile $K$ to be a $\mathbb{Z}^n$-tiling set \cite{GHA}.

\medskip

Define $\Gamma_{ij}^m\subset\mathbb{Z}^n$, for $m\ge 1$, by
\begin{eqnarray}\label{e-2-1}
B^mK_i=\displaystyle\bigcup_{j=1}^{M}(K_j+\Gamma_{ij}^m), \ i=1,\dotsc,M.
\end{eqnarray}
 It follows from (\ref{e-1-1}) that
$\Gamma_{ij}^1=\Gamma_{ij}$. Using (\ref{e-1-1}) with each prototile $K_j$, we have
$$
B^2K_i
=\displaystyle\bigcup_{\ell=1}^{M} (BK_{\ell}+B\Gamma_{i\ell})
= \displaystyle\bigcup_{\ell=1}^{M}\displaystyle\bigcup_{j=1}^{M} (K_j+\Gamma_{\ell j}+B\Gamma_{i\ell})
= \displaystyle\bigcup_{j=1}^{M} (K_j+\displaystyle\bigcup_{\ell=1}^{M} (\Gamma_{\ell j}+B\Gamma_{i\ell})).
$$
Thus,
$\Gamma_{ij}^2=\displaystyle\bigcup_{\ell=1}^{M}(\Gamma_{\ell j}+B\Gamma_{i\ell}).$
 Inductively, we obtain
\begin{eqnarray}\label{e-2-3}
\Gamma_{ij}^m=\displaystyle\bigcup_{\ell=1}^{M}(\Gamma_{\ell j}+B\Gamma_{i\ell}^{m-1}).
\end{eqnarray}
Define
\begin{eqnarray}\label{e-2-4}
\mathcal{D}_j^m:=\displaystyle\bigcup_{i=1}^{M}\Gamma_{ij}^m, \ m\ge 1.
\end{eqnarray}
Then the corresponding self-affine multi-tile $K=\displaystyle\bigcup_{i=1}^MK_i$ satisfies
\begin{eqnarray}\label{e-2-5}
BK=\displaystyle\bigcup_{i=1}^M BK_i
=\displaystyle\bigcup_{i=1}^M\displaystyle\bigcup_{j=1}^M (K_j+\Gamma_{ij})
=\displaystyle\bigcup_{j=1}^M (K_j+\mathcal{D}_j),
\end{eqnarray}
and, more generally,
\begin{eqnarray}\label{e-2-6}
B^mK=\displaystyle\bigcup_{i=1}^M B^m K_i
=\displaystyle\bigcup_{i=1}^M\displaystyle\bigcup_{j=1}^M (K_j+\Gamma_{ij}^m)
=\displaystyle\bigcup_{j=1}^M (K_j+\mathcal{D}_j^m), \ m\ge 1.
\end{eqnarray}

Denote $S: = \{1, \dotsc, M\}$. Define, for each $m\ge 1$, an equivalence relation $\overset{m}{\sim}$ on  $S$ by
\begin{eqnarray*}
i\overset{m}{\sim} j\Leftrightarrow \mathcal{D}_i^k=\mathcal{D}_j^k, \ 1\le k\le m, \ \text{for} \ i, j \in S.
\end{eqnarray*}
For each $m\ge 1$, we will denote by $F_1^m,\dotsc, F_{S_m}^m$ the corresponding equivalence classes. Obviously, $F_1^m,\dotsc, F_{S_m}^m$ give a partition of $S$.
If $\mathcal{D}_i^m=\mathcal{D}_j^m$ for any $m\ge 1$, we say that $i$ is equivalent to $j$ and denote as $i\sim j$. According to this equivalence relation, we can get a
partition $\{F_j\}_{j=1}^{\ell},\ 1<\ell\le M$, of $S$, where the sets $F_j, \ 1\le j\le \ell$ are the corresponding equivalence classes. Then, we have the following result.

\medskip

\begin{lem}\label{l-2-2}
Let $K=\bigcup\limits_{i=1}^{M}K_i$ be an integral self-affine $\mathbb{Z}^n$-tiling set with $M$ prototiles and
let $\widetilde{K}_j=\bigcup\limits_{i\in F_j}K_i, \ j=1,\dotsc, \ell$, where the partition  $\{ F_j\}_{j=1}^{\ell}$ of $S$ is defined as above.
Then $\widetilde{K}: = \bigcup\limits_{j=1}^{\ell} \widetilde{K}_j$ is an integral self-affine $\mathbb{Z}^n$-tiling set with $\ell$ ($\le M$) prototiles.
\end{lem}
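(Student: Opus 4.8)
The plan is to keep the \emph{same} underlying set: since $\{F_j\}_{j=1}^{\ell}$ partitions $S$ we have $\bigcup_{j=1}^{\ell}\widetilde K_j=\bigcup_{i=1}^{M}K_i=K$, so the $\mathbb Z^n$-tiling property of $\widetilde K$ is inherited verbatim from that of $K$, each $\widetilde K_j$ is a compact finite union, and the $\widetilde K_j$ remain pairwise almost disjoint. Thus the entire content is to exhibit digit sets making $\{\widetilde K_j\}_{j=1}^{\ell}$ an integral self-affine collection. Applying $B$ to $\widetilde K_j$ and expanding each $K_i$, $i\in F_j$, by (\ref{e-1-1}), one regroups the resulting pieces $\Gamma_{ip}+K_p$ by the class of $p$; the natural candidate is $\widetilde\Gamma_{jk}:=\bigcup_{i\in F_j}\Gamma_{ip}$ for an arbitrary $p\in F_k$. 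I would first record that, because the $K_i$ come from a $\mathbb Z^n$-tiling, any two pieces $\delta+K_p$ and $\delta'+K_{p'}$ with $\delta,\delta'\in\mathbb Z^n$ are either identical or almost disjoint; hence $B\widetilde K_j=\bigcup_{p=1}^{M}(G_{jp}+K_p)$, where $G_{jp}:=\bigcup_{i\in F_j}\Gamma_{ip}$. The relation (\ref{e-1-1}) for $\{\widetilde K_j\}$ then holds \emph{provided} $G_{jp}$ depends only on the class $k$ of $p$, and this $p$-independence within each class is the crux.

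To reach it I would establish two structural facts. Since the $K_i$ are almost disjoint and $B$ is a linear bijection, the $B^mK_i$ are almost disjoint, so for fixed $p$ the sets $\{\Gamma_{ip}^m\}_{i\in S}$ are pairwise disjoint and partition $\mathcal D_p^m$; in particular $\{G_{jp}\}_{j=1}^{\ell}$ partitions $\mathcal D_p$, which by hypothesis is a complete set of coset representatives for $\mathbb Z^n/B\mathbb Z^n$. Next, taking the union over the first index in (\ref{e-2-3}) yields the recursion $\mathcal D_p^m=\bigcup_{r=1}^{M}(\Gamma_{rp}+B\mathcal D_r^{m-1})$. Writing $\mathcal E_k^{m}$ for the common value of $\mathcal D_r^{m}$ over $r\in F_k$ (well defined by the definition of $\sim$) and grouping $r$ by its class gives
\[
\mathcal D_p^m=\bigcup_{k=1}^{\ell}\big(G_{kp}+B\,\mathcal E_k^{m-1}\big),
\]
and this union is disjoint, because the summands for different $k$ lie in disjoint families of cosets mod $B\mathbb Z^n$: the $G_{kp}$ partition the distinct-coset set $\mathcal D_p$ while $B\,\mathcal E_k^{m-1}\subseteq B\mathbb Z^n$.

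The heart of the argument is then a coset-by-coset reading of this identity. Fix a coset $c$ of $B\mathbb Z^n$ meeting $\mathcal D_p$ and let $\delta_c$ be its unique representative in $\mathcal D_p$; from the displayed decomposition, the part of $\mathcal D_p^m$ lying in $c$ is exactly $\delta_c+B\,\mathcal E_{k(c)}^{m-1}$, where $k(c)$ is the class with $\delta_c\in G_{k(c)p}$. For $p\sim p'$ we have $\mathcal D_p=\mathcal D_{p'}$, so $\delta_c$ is also the representative in $\mathcal D_{p'}$, and $\mathcal D_p^m=\mathcal D_{p'}^m$ for \emph{every} $m$; comparing the parts in coset $c$ forces $\mathcal E_{k(c)}^{m-1}=\mathcal E_{k'(c)}^{m-1}$ for all $m$, where $k'(c)$ is the class with $\delta_c\in G_{k'(c)p'}$. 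By the very definition of the equivalence relation, equality of $\mathcal E_a^{s}$ and $\mathcal E_b^{s}$ for all $s$ implies $a=b$, so $k(c)=k'(c)$. As $c$ ranges over the cosets met by $\mathcal D_p$, this says that each $\delta\in\mathcal D_p=\mathcal D_{p'}$ is assigned the same class from column $p$ and from column $p'$, whence $G_{kp}=G_{kp'}$ for all $k$, the required $p$-independence. I expect this step---extracting a class (row-wise) invariant from the purely column-wise data $\{\mathcal D_p^m\}$, and using \emph{all} levels $m$ simultaneously---to be the main obstacle.

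With well-definedness in hand, $\widetilde\Gamma_{jk}:=G_{jp}$ (any $p\in F_k$) satisfies $B\widetilde K_j=\bigcup_{k=1}^{\ell}(\widetilde\Gamma_{jk}+\widetilde K_k)$, i.e.\ (\ref{e-1-1}). Condition (\ref{e-1-2}) for $\{\widetilde K_j\}$ I would verify directly: a piece of $\beta+\widetilde K_i$ has the form $\beta+K_{p'}$ with $p'\in F_i$ and a piece of $\gamma+\widetilde K_j$ the form $\gamma+K_{q'}$ with $q'\in F_j$; these translates from the $\mathbb Z^n$-tiling are almost disjoint unless $p'=q'$ and $\beta=\gamma$, while $p'=q'$ forces $i=j$ since the classes are disjoint, so disjointness holds whenever $(i,\beta)\neq(j,\gamma)$. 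Finally $\bigcup_k\widetilde\Gamma_{kj}=\bigcup_{i\in S}\Gamma_{ip}=\mathcal D_p$ remains a complete set of coset representatives, so $\{\widetilde K_j\}_{j=1}^{\ell}$ is an integral self-affine collection that $\mathbb Z^n$-tiles, giving the claim with $\ell\le M$ prototiles.
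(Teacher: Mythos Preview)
Your proof is correct and follows essentially the same route as the paper's: both reduce the self-affine property for $\{\widetilde K_j\}$ to showing that $G_{kp}:=\bigcup_{i\in F_k}\Gamma_{ip}$ depends only on the $\sim$-class of $p$, and both establish this by reading the identity $\mathcal D_p^{m}=\bigcup_{k}(G_{kp}+B\mathcal E_k^{m-1})$ coset-by-coset (using that $\mathcal D_p$ is a complete set of representatives) to force $\mathcal E_{k(c)}^{m-1}=\mathcal E_{k'(c)}^{m-1}$ for all $m$, hence $k(c)=k'(c)$. The paper merely casts this as a contradiction argument (choosing $x\in\phi_{si_1}\setminus\phi_{si_2}$), while you phrase it as a direct equality; your additional explicit checks of the partition $\{G_{jp}\}_j$ of $\mathcal D_p$ and of condition~(\ref{e-1-2}) are details the paper leaves implicit.
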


\medskip

\begin{proof}
Obviously, $\widetilde{K}: = \bigcup\limits_{j=1}^{\ell} \widetilde{K}_j = \bigcup\limits_{i=1}^{M}K_i$ is a ${\mathbb Z}^n$-tiling set by the assumption.
It is left to prove that the collection $\{\widetilde{K}_j, 1 \le j \le \ell\}$ is an integral self-affine collection.

\noindent First, we will prove that $\bigcup\limits_{i\in F_s}\Gamma_{ij}$ does not depend on $j$ for $j\in F_t$, $t\in\{1,\dotsc, \ell\}$.
Without loss of generality, we can assume that $i_1\ne i_2 \in F_t$ for some $t\in\{1,\dotsc, \ell\}$. Then for any $m\ge 1$,
using (\ref{e-2-3}), (\ref{e-2-4}) and that $S=\bigcup\limits_{j=1}^{\ell}F_j$, we have
\begin{eqnarray}\label{e-2-12}
\begin{array}{crl}
\mathcal{D}_{i_1}^{m+1}=\mathcal{D}_{i_2}^{m+1}
&\Longleftrightarrow&
\bigcup\limits_{i=1}^{M} (\Gamma_{ii_1}+B\mathcal{D}_i^{m}) = \bigcup\limits_{i=1}^{M} (\Gamma_{ii_2}+B\mathcal{D}_i^{m})\\
&\Longleftrightarrow&
\bigcup\limits_{s=1}^{\ell}\bigcup\limits_{i\in F_s} (\Gamma_{ii_1}+B\mathcal{D}_i^{m})
=\bigcup\limits_{s=1}^{\ell}\bigcup\limits_{i\in F_s} (\Gamma_{ii_2}+B\mathcal{D}_i^{m}).
\end{array}
\end{eqnarray}
Let $\phi_{si_1}=\bigcup\limits_{i\in F_s}\Gamma_{ii_1},\ \phi_{si_2}=\bigcup\limits_{i\in F_s}\Gamma_{ii_2}$. Next, we want to show that $\phi_{si_1}=\phi_{si_2}$ for any $i_1, \ i_2\in F_t$.  Note that
\begin{eqnarray}\label{e-2-13}
\mathcal{D}_{i_1}=\bigcup\limits_{s=1}^{\ell}\phi_{si_1}=\bigcup\limits_{s=1}^{\ell}\phi_{si_2}=\mathcal{D}_{i_2},
\end{eqnarray}
 which is a complete set of coset representatives for $\mathbb{Z}^n/B\mathbb{Z}^n$ by assumption.
By the definition of $F_s$, the sets $\mathcal{D}_i^m$ are the same for $i\in F_s$ and any $m\ge 1$. We will denote this common set by
 $\mathcal{D}_{\tau_s}^m$. Then equation (\ref{e-2-12}) can be rewritten as the following
\begin{eqnarray}\label{e-2-14}
\mathcal{D}_{i_1}^{m+1}=\mathcal{D}_{i_2}^{m+1}\Longleftrightarrow \bigcup\limits_{s=1}^{\ell} (\phi_{si_1}+B\mathcal{D}_{\tau_s}^{m})
=\bigcup\limits_{s=1}^{\ell} (\phi_{si_2}+B\mathcal{D}_{\tau_s}^{m}).
\end{eqnarray}
Suppose that, for some $s\in \{1,\dotsc,\ell\}$, $\phi_{si_1}\ne\phi_{si_2}$. Then $\phi_{si_1}\setminus\phi_{si_2}\ne\emptyset$ or
$\phi_{si_2}\setminus\phi_{si_1}\ne\emptyset$.

\noindent WLOG, we assume that there exists $x\in\phi_{si_1}\setminus\phi_{si_2}$, then
$x\in \phi_{ti_2}$ for some $t\in\{1,2,\dotsc,\ell\}$ and $t\ne s$ by (\ref{e-2-13}).
Next, we will prove that
$(x+B\mathcal{D}_{\tau_s}^{m}) \cap (y+B\mathcal{D}_{\tau_w}^{m}) = \emptyset$ for any $y\in \phi_{wi_2}$, $w\in \{1,2,\dotsc, \ell\}$ and $y\ne x$.
Otherwise, there is $y\in\phi_{wi_2}$ and $y\ne x$ such that
$(x+B\mathcal{D}_{\tau_s}^{m}) \cap (y+B\mathcal{D}_{\tau_w}^{m})\ne\emptyset$.
This implies that $(x-y)\in B\mathbb{Z}^n$, which gives a contradiction
since $x\in \phi_{si_1}\subset\mathcal{D}_{i_1}, \ y\in \phi_{wi_2}\subset\mathcal{D}_{i_2}$ and $\mathcal{D}_{i_1}=\mathcal{D}_{i_2}$
is a complete set of coset representatives for $\mathbb{Z}^n/B\mathbb{Z}^n$ by (\ref{e-2-13}).
Hence, it follows from (\ref{e-2-14}) that the only possibility is $x+B\mathcal{D}_{\tau_s}^{m} = x+B\mathcal{D}_{\tau_t}^{m}$,
which forces that $\mathcal{D}_{\tau_s}^{m}=\mathcal{D}_{\tau_t}^{m}$ for any $m\in\mathbb{N}$, contradicting the fact that $F_s$ and $F_t$ are different equivalence classes.
Therefore, $\phi_{si_1}=\phi_{si_2}$ for any $i_1, \ i_2\in F_t$. In other words,  $\bigcup\limits_{i\in F_s}\Gamma_{ij}$ is independent of $j\in F_t$. Let
$\Lambda_{st}=\bigcup\limits_{i\in F_s}\Gamma_{ij},\ j\in F_t$.  Then
\begin{eqnarray*}
B\widetilde{K}_s=B\bigcup\limits_{i\in F_s}K_i
=\bigcup\limits_{i\in F_s}\bigcup\limits_{j=1}^M (K_j+\Gamma_{ij})
= \bigcup\limits_{t=1}^{\ell}\bigcup\limits_{j\in F_t} (K_j+\bigcup\limits_{i\in F_s}\Gamma_{ij})
= \bigcup\limits_{t=1}^{\ell} (\widetilde{K}_t+\Lambda_{st}).
\end{eqnarray*}
This proves that $\{\widetilde{K}_j, 1\le j \le \ell\}$ is an integral self-affine collection.
\end{proof}

Lemma \ref{l-2-2} provides us an idea to decompose $K$ according to the equivalence relation ``$\sim$".
 We can decompose $K$ by finding the intersection of non-zero measure between different integer translations of its $B$-dilations and itself.

\medskip

For $m\ge 1$, let
$C_m$ be the collection of  sets with positive Lebesgue measure of the form
 $$(B^{k_1}K+\ell_1)\cap(B^{k_2}K+\ell_2)\cap \dotsm \cap (B^{k_r}K+\ell_r)\cap K,$$
where $1\le k_i\le m$, $\ell_i\in \mathbb{Z}^n$, $r\ge 1$. Then $C_1\subseteq C_2\subseteq\dotsm$ and each $C_m$ is finite
since, for a fixed $k\ge1$, only finitely integral translates of $B^kK$ can intersect $K$.
Furthermore, $C_m$ is stable under intersection, i.e. $A,B\in C_m$ implies that $A\cap B\in C_m$ if $A\cap B\ncong\emptyset$.

\medskip

For each $m\ge 1$, we order the sets in $C_m$ by inclusion
and we denote by $C_m^{\prime}$ the subset of $C_m$
consisting of all \emph{minimal} sets in  $C_m$.
``Minimal sets in $C_m$" means that $A\in C_m^{\prime}$ if and only if $A\in C_m$ and $A$ has the property that, if $B\in   C_m$
and $B\subset A$, then $B=A$.
Then the elements of $C_m^{\prime}$ are of the form
 \begin{eqnarray}\label{e-2-e}
\bigcap\limits_{\ell_m\in L_m}\dotsm\bigcap\limits_{\ell_1\in L_1}(B^mK+\ell_m)\cap\dotsc\cap(BK+\ell_1)\cap K
\end{eqnarray}
for some subsets $L_i \subset \mathbb{Z}^n, \ i=1,\dotsc, m$,
 and any element of $C_m$ can be written as a union of sets in $C_m^{\prime}$.
Clearly, the collection of sets in  $C_m^{\prime}$ forms a partition of $K$. We should mention here that the partitions of $K$ considered in this work are always meant up to intersections of zero-measure.

\medskip

\begin{lem}\label{l-2-1}
Let $K$ be an integral self-affine $\mathbb{Z}^n$-tiling set with $M$ prototiles. For each $m\ge 1$, a set $E\in C_m^{\prime}$ if and only if $E=\bigcup\limits_{i\in F_s^m}K_i$, for some $s$ with $1\le s\le S_m$, where  $F_s^m$ is some equivalence class according to the equivalence relation  $\overset{m}\sim$ in the $m$-th step and $S_m$ is the number of equivalence class in this step.
\end{lem}
\begin{proof}


\medskip

To prove that $E=\bigcup\limits_{i\in F_s^m}K_i$, for some $1\le s\le S_m$ implies that $E\in C_m^{\prime}$.
We will first prove that if $i_1\in S$, $K_{i_1}\subset E\in C_m^{\prime}$ and $i_2\overset{m}\sim i_1,$ then $K_{i_2}\subset E$. Indeed, any $E\in C_m^{\prime}$ can be written as
\begin{eqnarray*}
E&=&\bigcap\limits_{\ell_m\in L_m}\dotsm\bigcap\limits_{\ell_1\in L_1}(B^mK+\ell_m)\cap\dotsc\cap(BK+\ell_1)\cap K\\
&=&\bigcap\limits_{\ell_m\in L_m}\dotsm\bigcap\limits_{\ell_1\in L_1}\bigcup\limits_{j_0,j_1,\dotsc, j_m=1}^M(K_{j_m}+\mathcal{D}_{j_m}^m+\ell_m)\cap\dotsc\cap(K_{j_1}+\mathcal{D}_{j_1}+\ell_1)\cap K_{j_0}\\
&=&\bigcap\limits_{\ell_m\in L_m}\dotsm\bigcap\limits_{\ell_1\in L_1}\bigcup\limits_{j=1}^M(K_j+\mathcal{D}_j^m+\ell_m)\cap\dotsc\cap(K_j+\mathcal{D}_j+\ell_1)\cap K_j.
\end{eqnarray*}
It follows that the inclusion $K_{i_1}\subset E$ is equivalent to
$L_1\subseteq -\mathcal{D}_{i_1}^1, L_2\subseteq -\mathcal{D}_{i_1}^2,\dotsc,  L_m\subseteq -\mathcal{D}_{i_1}^m$. Thus, if $i_2\overset{m}\sim i_1,$ we have $K_{i_2}\subset E\Leftrightarrow K_{i_1}\subset E$.

\noindent  Next, we will prove that $E\in C_m^{\prime}$ implies that  $E=\bigcup\limits_{i\in F_s^m}K_i$, for some $1\le s\le S_m$. Using the definition of $C_m$ and the fact that $K$ is an integral self-affine $\mathbb{Z}^n$-tiling set, it is easy to see that any element of $C_m$, and thus of $C_m^{\prime}$, is the union of some sets $K_i, \ i\in I\subseteq S$. In the following , we will show that $I = F_s^m$ for some $1\le s\le S_m$.
we use induction on $m$.
For $m=1$, suppose that $\bigcap\limits_{\ell_1\in L_1}(BK+\ell_1)\cap K\in C_1^{\prime}$. Then there exists some $T_1\subseteq S$ such that
\begin{eqnarray*}
\bigcap\limits_{\ell_1\in L_1}(BK+\ell_1)\cap K=\bigcap\limits_{\ell_1\in L_1}\bigcup\limits_{j=1}^M(K_j+\mathcal{D}_j+\ell_1)\cap K_j=\bigcup\limits_{i\in T_1}K_i,
\end{eqnarray*}
which implies that $L_1\subseteq \bigcap\limits_{i\in T_1}-\mathcal{D}_i$. If $T_1$ contains at least two different elements, then $\mathcal{D}_{i_1}=\mathcal{D}_{i_2}$ for any $i_1,i_2\in T_1$. Otherwise, we can find $p\in-(\mathcal{D}_{i_1}\setminus \mathcal{D}_{i_2})$ and we have
\begin{eqnarray*}
K_{i_1}\subseteq (BK+p)\cap K\in C_1 \ {\rm{and}} \ K_{i_2}\cap (BK+p)\cap K \cong \emptyset.
\end{eqnarray*}
Thus, we obtain that
\begin{eqnarray*}
\emptyset\ncong K_{i_1}\subseteq \bigcap\limits_{\ell_1\in L_1}(BK+\ell_1)\cap K\cap (BK+p)=\bigcup\limits_{i\in T_1}K_i \cap (BK+p)\cap K\subseteq \bigcup\limits_{i\in T_1\setminus\{i_2\}}K_i,
\end{eqnarray*}
which contradicts the fact that $\bigcap\limits_{\ell_1\in L_1}(BK+\ell_1)\cap K=\bigcup\limits_{i\in T_1}K_i$ is the minimal set. This proves our claim for $m=1$, i.e., that $I = F_{s}^1 $ for some $1\le  s \le S_1$. Suppose that our claim is true for some $m>1$. Then we will prove it is true for $m+1$. Let
$$\bigcap\limits_{\ell_{m+1}\in L_{m+1}}\dotsm\bigcap\limits_{\ell_1\in L_1}(B^{m+1}K+\ell_{m+1})\cap\dotsc\cap(BK+\ell_1)\cap K=\bigcup\limits_{i\in T_{m+1}\subseteq S}K_i\in C_{m+1}^{\prime}.$$
Since any element of $C_{m+1}^{\prime}$ is contained in some element of $C_m^{\prime}$, we here have $\bigcup\limits_{i\in T_{m+1}}K_i\subseteq \bigcup\limits_{i\in T_m} K_i\in C_m^{\prime}$ for some set $T_m\subset S$. Next we will prove that $T_{m+1}$ is an equivalence class associated with $\overset{m+1}\sim$. Using our induction hypothesis, $T_{m}$ is an equivalence class associated with $\overset{m}\sim$. If $T_{m+1}$ contains only one element, we are done. Assume that $T_{m+1}$ contains at least two different elements, say $i_1\ne i_2$, and $\mathcal{D}_{i_1}^{m+1}\ne \mathcal{D}_{i_2}^{m+1}$. Then there exists $p\in-(\mathcal{D}_{i_1}^{m+1}\setminus \mathcal{D}_{i_2}^{m+1})$ such that
\begin{eqnarray*}
K_{i_1}\subseteq (B^{m+1}K+p)\cap K\in C_{m+1} \ {\rm{and}} \ K_{i_2}\cap (B^{m+1}K+p)\cap K\cong\emptyset.
\end{eqnarray*}
Hence, we have
\begin{eqnarray*}
K_{i_1}&\subseteq&\bigcap\limits_{\ell_{m+1}\in L_{m+1}}\dotsm\bigcap\limits_{\ell_1\in L_1}(B^{m+1}K+\ell_{m+1})
\cap\dotsc\cap(BK+\ell_1)\cap K\cap (B^{m+1}K+p)\\
&\subseteq& \bigcup\limits_{i\in T_{m+1}\setminus\{i_2\}}K_i,
\end{eqnarray*}
which implies that $\bigcup\limits_{i\in T_{m+1}}K_i$ is not a minimal one in $C_m$. This is a contradiction. So $\mathcal{D}_{i_1}^{m+1}=\mathcal{D}_{i_2}^{m+1}$. On the other hand, since $i_1\overset{m}\sim i_2$ for any $i_1,i_2\in T_{m+1}\subseteq T_m$, we have $\mathcal{D}_{i_1}^{k}=\mathcal{D}_{i_2}^{k}$ for $k=1,\dotsc, m$. This proves that $T_{m+1}$ is an equivalence class for the equivalence under $\overset{m+1}\sim$.
\end{proof}

As we mentioned in the introduction, the representation of an integral self-affine $\mathbb{Z}^n$-tiling set is not unique. For convenience, we call the representation of an integral self-affine tiling set $K=\bigcup\limits_{i=1}^{S_M} K_i$ to be {\it in its simplest form} if the number $S_M$ is the minimal.

\medskip

Given an integral self-affine $\mathbb{Z}^n$-tiling set $K$, the sets in $C_m^{\prime}$ form a partition of $K$. Let $C_m^{\prime}=\{K_{mi},i=1,\dotsc, S_m\}$. Since each set in $C_m$ is a finite union of the prototiles making up the set $K$, the cardinality of $C_m$ is bounded independently of $m$ and thus, for some $m_0\ge 1$, $C_m=C_{m_0}$ for $m\ge m_0$, and thus $C_m^{\prime}=C_{m_0}^{\prime}$ for $m\ge m_0$.
We let $S_{m_0}=N$ and $K_{m_0i}=W_i, \ i=1,\dotsc, N$. The procedure just described provides a method to decompose any integral self-affine $\mathbb{Z}^n$-tiling set into measure disjoint prototiles with a representation in its simplest form, as shown in the following theorem.

\begin{thm}\label{t-2-1}
Suppose that $K$ is an integral self-affine $\mathbb{Z}^n$-tiling set. Then $K=\bigcup\limits_{i=1}^N W_i$ is an integral self-affine $\mathbb{Z}^n$-tiling set with $N$ prototiles, where $W_i, \ i=1,\dotsc, N$ are defined as above. Furthermore, the representation $K=\bigcup\limits_{i=1}^NW_i$ is in its simplest form and the simplest form representation of $K$ is unique.
\end{thm}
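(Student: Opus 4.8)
The plan is to prove the three assertions in order, leaning on Lemmas \ref{l-2-2} and \ref{l-2-1} together with the observation that the collection $C_m'$ depends only on the pair $(K,B)$ and not on any chosen representation of $K$. First, for the tiling property: since the $C_m$ are finite with cardinality bounded independently of $m$ and $C_1\subseteq C_2\subseteq\cdots$, the chain stabilizes, so $C_m'=C_{m_0}'=\{W_1,\dotsc,W_N\}$ for all $m\ge m_0$. The relations $\overset{m}{\sim}$ become finer as $m$ grows (passing from $m$ to $m+1$ only adds the constraint $\mathcal{D}_i^{m+1}=\mathcal{D}_j^{m+1}$), so the partitions $\{F_s^m\}$ refine one another, and by Lemma \ref{l-2-1} the map $s\mapsto\bigcup_{i\in F_s^m}K_i$ is a bijection from these classes onto $C_m'$; hence $|C_m'|=S_m$. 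Stabilization of $C_m'$ therefore forces $\overset{m}{\sim}\,=\,\overset{m_0}{\sim}$ for all $m\ge m_0$, and consequently $\sim\,=\,\bigcap_{m\ge 1}\overset{m}{\sim}\,=\,\overset{m_0}{\sim}$. Thus the classes $F_s^{m_0}$ of $\overset{m_0}{\sim}$ coincide with the classes $F_j$ of $\sim$, giving $N=\ell$ and $W_i=\widetilde{K}_i=\bigcup_{j\in F_i}K_j$. Lemma \ref{l-2-2} then yields at once that $K=\bigcup_{i=1}^N W_i$ is an integral self-affine $\mathbb{Z}^n$-tiling set with $N$ prototiles.

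Next, for minimality, the key point I would stress is that the definition of $C_m$, and hence of $C_m'$ and of the atoms $W_i$, refers only to the dilates $B^k K$ and their integer translates, so this collection is intrinsic to $(K,B)$ and in no way depends on a chosen representation. Given an arbitrary integral self-affine $\mathbb{Z}^n$-tiling representation $K=\bigcup_{j=1}^{M'}K_j'$ with $\lvert K_j'\rvert>0$, I would apply Lemma \ref{l-2-1} to this representation at the intrinsic level $m_0$: each $W_i\in C_{m_0}'$ then equals a union of the prototiles $K_j'$ over a single equivalence class of the relation $\overset{m_0}{\sim}$ attached to $\{K_j'\}$ (the relation built from its own digit sets via \eqref{e-2-6}). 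In other words, the partition $\{K_j'\}_{j=1}^{M'}$ refines $\{W_i\}_{i=1}^{N}$, whence $M'\ge N$, so $N$ is the minimal number of prototiles.

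Finally, uniqueness follows from the same refinement relation via a counting argument. If $\{K_j'\}_{j=1}^N$ is any representation attaining the minimum $M'=N$, then $\{K_j'\}$ refines $\{W_i\}$ while both are partitions of $K$ into exactly $N$ sets of positive measure (each $W_i=\bigcup_{j\in F_i}K_j$ has $\lvert W_i\rvert>0$ as a union of positive-measure prototiles, and $\lvert K_j'\rvert>0$ by hypothesis); a refinement of a partition into the same number of nonempty blocks must coincide with it modulo null sets, so $\{K_j'\}=\{W_i\}$ up to relabeling.

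I expect the main obstacle to lie in making the intrinsic, representation-independent nature of $C_m'$ precise — specifically, verifying that Lemma \ref{l-2-1} applies verbatim to an \emph{arbitrary} valid representation, since its proof invokes only the self-affine identity \eqref{e-2-6} and the $\mathbb{Z}^n$-tiling property, both of which hold for any such representation. Once this is granted, the remaining steps (stabilization of the chain, the bijection with equivalence classes, and the refinement-plus-counting argument) are purely combinatorial.
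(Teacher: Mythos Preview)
Your proposal is correct and follows essentially the same route as the paper: both apply Lemma~\ref{l-2-1} to identify the atoms $W_i\in C_{m_0}'$ with unions of prototiles over equivalence classes, invoke Lemma~\ref{l-2-2} to obtain the self-affine structure, and then exploit the representation-independence of $C_{m_0}'$ to get minimality and uniqueness via refinement. You are more explicit than the paper on two points it glosses over---that stabilization of $C_m'$ forces $\overset{m_0}{\sim}\,=\,\sim$, and the counting argument for uniqueness---but these are elaborations of the same argument rather than a different method.
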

\begin{proof}

\medskip

Since $K$ is an integral self-affine $\mathbb{Z}^n$-tiling set, there exists a partition $\{K_i\}_{i=1}^M$ of $K$ such that
$K=\bigcup\limits_{i=1}^M K_i$ is an integral self-affine $\mathbb{Z}^n$-tiling set with $M$ prototiles. By the definition of $W_i$, the sets $W_i, \ i=1,\dotsc N$ are measure disjoint and Lemma \ref{l-2-1} implies that $W_i=\bigcup\limits_{j\in T_i\subseteq S}K_j$ for each $i=1,\dotsc, N$, where the sets $T_i, i=1,\dotsc, N$ are the equivalence classes obtained by the equivalence relation ``$\sim$". Thus $S=\bigcup\limits_{i=1}^N T_i$. It follows from Lemma \ref{l-2-2} that the collection $\{W_i\}_{i=1}^N$ is an integral self-affine collection. Therefore, $K=\bigcup\limits_{i=1}^NW_i$ is an integral self-affine $\mathbb{Z}^n$-tiling set with $N$ prototiles. We note that the sets $W_i, \ i=1,\dotsc, N$ do not depend on the sets $K_i, \ i=1,\dotsc, M$. For any representation of the set $K$ which satisfies the integral self-affine conditions, the set $W_i$ is the union of some prototiles $K_i$ for each $i=1,\dotsc, N$. Thus, we have $N\le M$. This proves that the representation $K=\bigcup\limits_{i=1}^NW_i$ is
in its simplest form and the simplest form representation of $K$ is unique.
\end{proof}

Theorem \ref{t-2-1} shows that if an integral self-affine $\mathbb{Z}^n$-tiling set $K=\bigcup\limits_{i=1}^{M}K_i$ is not in its simplest form, then there exists a partition $\{F_j\}_{j=1}^{\ell}$ of the set $S$ with $F_{j_0}$ having at least two elements for at least one $j_0$, such that the collection $\{\bigcup\limits_{i\in F_j}K_i\}_{j=1}^{\ell}$ is an integral self-affine collection. Combining all above results, we get the following corollary, which provides a necessary and sufficient condition for an integral self-affine $\mathbb{Z}^n$-tiling set to be in its simplest form.

\begin{cor}\label{c-2-1}
Let $K=\bigcup\limits_{i=1}^{M}K_i$ be an integral self-affine $\mathbb{Z}^n$-tiling set with $M$ prototiles. Then, the representation $K=\bigcup\limits_{i=1}^{M}K_i$ is in its simplest form if and only if for any $i_1,\ i_2\in S$ with $i_1\ne i_2$, there exists some $m\ge 1$ such that
$\mathcal{D}_{i_1}^m\ne\mathcal{D}_{i_2}^m$.
\end{cor}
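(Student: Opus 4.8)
The plan is to prove the corollary by recognizing that it is essentially a restatement of the minimality characterization already established in Theorem~\ref{t-2-1}. The representation $K=\bigcup_{i=1}^M K_i$ is in its simplest form precisely when $M=N$, where $N$ is the (unique) minimal number of prototiles produced by the equivalence relation ``$\sim$''. Since Lemma~\ref{l-2-1} identifies the sets $W_i$ with the equivalence classes $T_i$ of ``$\sim$'', and since $S=\bigcup_{i=1}^N T_i$ with $N\le M$ always holds, equality $N=M$ occurs if and only if every equivalence class $T_i$ is a singleton. Thus the strategy is to translate the condition ``every equivalence class under $\sim$ is a singleton'' into the stated digit-set condition.

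For the forward direction, I would argue the contrapositive: suppose the stated condition fails, so there exist $i_1\ne i_2$ in $S$ with $\mathcal{D}_{i_1}^m=\mathcal{D}_{i_2}^m$ for all $m\ge 1$. By the definition of ``$\sim$'', this means exactly that $i_1\sim i_2$, so $i_1$ and $i_2$ lie in the same equivalence class $T_s$, which therefore contains at least two elements. By Lemma~\ref{l-2-1} (applied in the stable regime $m\ge m_0$), the corresponding set $W_s=\bigcup_{i\in T_s}K_i$ is a union of at least two prototiles $K_i$, so $N<M$ and the representation is not in its simplest form. Conversely, if the stated condition holds for every pair $i_1\ne i_2$, then no two distinct indices are equivalent under ``$\sim$'', every equivalence class $T_i$ is a singleton, and hence $N=M$; by Theorem~\ref{t-2-1} the representation $K=\bigcup_{i=1}^M K_i$ coincides with the minimal one and is in its simplest form.

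The only subtle point, and the step I would treat most carefully, is ensuring that the equivalence relation ``$\sim$'' defined by ``$\mathcal{D}_i^m=\mathcal{D}_j^m$ for all $m\ge 1$'' genuinely governs the minimal count $N$. This requires invoking both lemmas in tandem: Lemma~\ref{l-2-2} guarantees that collapsing equivalence classes yields a legitimate integral self-affine collection (so that $N$ prototiles suffice), while Lemma~\ref{l-2-1} together with the stabilization $C_m^{\prime}=C_{m_0}^{\prime}$ guarantees that the sets $W_i$ are intrinsic to $K$ and independent of the starting representation (so that no representation can do better than $N$). The stabilization argument is what makes the infinite family of conditions $\{\mathcal{D}_i^m=\mathcal{D}_j^m:m\ge1\}$ effectively finite, and I expect verifying this interplay to be the main obstacle rather than any computation.

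Given how directly the conclusion follows once these identifications are in place, I would keep the proof short, essentially citing Theorem~\ref{t-2-1} and the definition of ``$\sim$'' and spelling out only the singleton-class equivalence. The corollary is best viewed as the clean logical contrapositive packaging of the minimality already proved, so the writing should emphasize the equivalence ``$i_1\sim i_2 \iff \mathcal{D}_{i_1}^m=\mathcal{D}_{i_2}^m$ for all $m$'' and the fact that simplest form means no nontrivial equivalence exists.
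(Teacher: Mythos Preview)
Your proposal is correct and matches the paper's approach exactly. The paper does not give a separate proof for this corollary; it simply states that it follows ``combining all above results'' after Theorem~\ref{t-2-1}, and your write-up is precisely the spelling out of that combination: simplest form means $M=N$, which by Theorem~\ref{t-2-1} and Lemma~\ref{l-2-1} happens if and only if every equivalence class of $\sim$ is a singleton, which is verbatim the stated digit-set condition.
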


\medskip

Theorem \ref{t-2-1} gives us an iterative method to decompose an integral self-affine $\mathbb{Z}^n$-tiling set $K$ into measure disjoint prototiles $K_j, \ j=1,\dotsc, M$ such that the collection $\{K_j\}_{j=1}^M$ is an integral self-affine collection. Moreover, this representation is in its simplest form and the decomposition is unique in the sense that the number of prototiles is least by Corollary \ref{c-2-1} and also, in the sense, that given any representation of $K$ as a union of prototiles $K_j, \ j=1,\dotsc, M$, the elements of the minimal representation can always be written as finite unions of these sets $K_j$. Given an integral self-affine $\mathbb{Z}^n$-tiling set $K$, we compute the collection $C_m^{\prime}$ step by step until we find an integer $m_0$ such that $C_m^{\prime}=C_{m_0}^{\prime}$ for any $m\ge m_0$. Or, alternatively, we check whether or not the collection $C_m^{\prime}$ obtained at each step is an integral self-affine collection. If it is, we stop and $K=\bigcup\limits_{i=1}^{S_m}
K_{mi}$ is an integral self-affine $\mathbb{Z}^n$-tiling set and the representation is in its simplest form.

\medskip

As we mentioned before, there are many integral self-affine $\mathbb{Z}^n$-tiling sets which have different representations. However, the representation we provide here is unique by Corollary \ref{c-2-1}. Such examples will be given in Section 3. Furthermore, we can also use this iterative method to determine whether or not a given measurable $\mathbb{Z}^n$-tiling set $K\subset\mathbb{R}^n$ is an integral self-affine multi-tile associated with any given $n\times n$ integral expansive matrix $B$. $K$ is an integral self-affine $\mathbb{Z}^n$-tiling set if and only if the process stops after finitely many steps.

\medskip

Finally, we mention that our iterative method
 can only be applied to  measurable $\mathbb{Z}^n$-tiling sets.
 For a non $\mathbb{Z}^n$-tiling set, it is not applicable.

\section{Examples}

{\hskip 0.7cm}  For some integral self-affine tiling sets with simple geometrical shape, it is easy to see how to decompose the given measurable set $K\subset\mathbb{R}^n$ into measure disjoint pieces $K_j$ such that $K_j,\ j\in I$, where $I$ is a finite set, is an integral self-affine collection. However, for those with complicated geometrical shape, it might not be easy to represent it as an integral self-affine collection. For such self-affine tiling sets, we can use the method introduced in Section 2 to solve this problem. In this section, we will give some examples to show how to use the method given in Section 2 to represent an integral self-affine $\mathbb{Z}^n$-tiling set in its simplest form.

\medskip

\begin{ex}
In dimension one, consider the set $K=[-\frac{3}{4},\frac{1}{4}]$ associated with $B=2$.
\end{ex}

\medskip

The set $K$ here can be not only an integral self-affine $\mathbb{Z}$-tiling set with $4$ prototiles, but an integral self-affine $\mathbb{Z}$-tiling set with $3$ prototiles. In the following, we will give its representation for each case. For the first case, let
\begin{eqnarray*}
K_1=[-\frac{3}{4},-\frac{1}{2}], \ K_2=[-\frac{1}{2},-\frac{1}{4}], \ K_3=[-\frac{1}{4},0], \ K_4=[0, \frac{1}{4}].
\end{eqnarray*}
Then, we have
\begin{align*}
BK_1&=[-\frac{3}{2},-1]=(K_2-1)\cup (K_3-1)\Longrightarrow \Gamma_{11}=\emptyset, \ \Gamma_{12}=\{-1\}, \ \Gamma_{13}=\{-1\}, \ \Gamma_{14}=\emptyset,\\
 BK_2&=[-1,-\frac{1}{2}]=(K_4-1)\cup K_1 \Longrightarrow \Gamma_{21}=\{0\}, \ \Gamma_{22}=\emptyset, \ \Gamma_{23}=\emptyset, \ \Gamma_{24}=\{-1\},\\
BK_3&=[-\frac{1}{2},0]=K_2\cup K_3 \Longrightarrow \Gamma_{31}=\emptyset, \ \Gamma_{32}=\{0\}, \ \Gamma_{33}=\{0\}, \ \Gamma_{34}=\emptyset,\\
BK_4&=[0,\frac{1}{2}]=(K_1+1)\cup K_4 \Longrightarrow \Gamma_{41}=\{1\}, \ \Gamma_{42}=\emptyset, \ \Gamma_{43}=\emptyset, \ \Gamma_{44}=\{0\},
\end{align*}
and
\begin{eqnarray*}
&&\mathcal{D}_1=\bigcup\limits_{i=1}^4\Gamma_{i1}=\Big\{0,1\Big\},  \  \ \ \mathcal{D}_2=\bigcup\limits_{i=1}^4\Gamma_{i2}=\Big\{-1,0\Big\},\\
&& \mathcal{D}_3=\bigcup\limits_{i=1}^4\Gamma_{i3}=\Big\{-1,0\Big\}, \ \mathcal{D}_4=\bigcup\limits_{i=1}^4\Gamma_{i4}=\Big\{-1,0\Big\}.
\end{eqnarray*}
This shows that for each $j\in\{1,2,3,4\}$, $\mathcal{D}_j$ is a complete set of coset representatives for the group $\mathbb{Z}/2\mathbb{Z}$
and the set $K$ is an integral self-affine $\mathbb{Z}$-tiling set with $4$ prototiles. Define
$\mathcal{D}_j^m$ as in (\ref{e-2-4}). It follows from (\ref{e-2-3}) and (\ref{e-2-4}) that $\mathcal{D}_j^m=\bigcup\limits_{i=1}^4 (\Gamma_{ij}+2\mathcal{D}_i^{m-1})$. Note that for $i\in\{1,2,3,4\}$, $\Gamma_{i2}=\Gamma_{i3}$. Thus we get $\mathcal{D}_2^m=\mathcal{D}_3^m$ for any $m\ge 1$ by the definition of $\mathcal{D}_j^m$. On the other hand, since
\begin{eqnarray*}
\mathcal{D}_1^2=\bigcup\limits_{i=1}^4 (\Gamma_{i1}+2\mathcal{D}_i) = \Big\{-2,-1,0,1\Big\}=\mathcal{D}_2^2, \
\mathcal{D}_4^2=\bigcup\limits_{i=1}^4 (\Gamma_{i4}+2\mathcal{D}_i) = \Big\{-3,-2,-1,0\Big\},
\end{eqnarray*}
it follows that $\mathcal{D}_1\ne \mathcal{D}_2$, $\mathcal{D}_1^2\ne \mathcal{D}_4^2$ and $\mathcal{D}_2^2\ne \mathcal{D}_4^2$. The equivalence classes for the equivalence relation $\sim$ are thus $\{1\},\{2,3\}$ and $\{4\}$. By Theorem \ref{t-2-1}, $K=\bigcup\limits_{i=1}^4 K_i$ is not in ``the simplest form". By the proof in Theorem \ref{t-2-1}, we let
\begin{eqnarray*}
K_1^{\prime}=[-\frac{3}{4},-\frac{1}{2}], \ K_2^{\prime}=[-\frac{1}{2},0], \ K_3^{\prime}=[0,\frac{1}{4}].
\end{eqnarray*}
Define $\Gamma_{ij}^{\prime},\ i,j=1,2,3$ to satisfy $BK_i^{\prime}=\bigcup\limits_{j=1}^3 (K_j+\Gamma_{ij}^{\prime})$ and $\mathcal{D}_j^{\prime}=\bigcup\limits_{i=1}^3\Gamma_{ij}^{\prime}$. Then, we have
\begin{align*}
BK_1^{\prime}&=[-\frac{3}{2},-1]=(K_2^{\prime}-1)\Longrightarrow \Gamma_{11}^{\prime}=\emptyset, \ \Gamma_{12}^{\prime}=-1, \ \Gamma_{13}^{\prime}=\emptyset,\\
BK_2^{\prime}&=[-1,0]=K_1^{\prime}\cup K_2^{\prime}\cup (K_3^{\prime}-1)\Longrightarrow \Gamma_{21}^{\prime}=0, \ \Gamma_{22}^{\prime}=0, \ \Gamma_{23}^{\prime}=-1,\\
BK_3^{\prime}&=[0,\frac{1}{2}]=(K_1^{\prime}+1)\cup K_3^{\prime}\Longrightarrow \Gamma_{31}^{\prime}=1, \ \Gamma_{32}^{\prime}=\emptyset, \ \Gamma_{33}^{\prime}=0.
\end{align*}
Furthermore,
\begin{eqnarray*}
\mathcal{D}_1^{\prime}=\bigcup\limits_{i=1}^3\Gamma_{i1}^{\prime}=\Big\{0,1\Big\}, \ \mathcal{D}_2^{\prime}=\bigcup\limits_{i=1}^3\Gamma_{i2}^{\prime}=\Big\{-1,0\Big\}, \ \mathcal{D}_3^{\prime}=\bigcup\limits_{i=1}^3\Gamma_{i3}^{\prime}=\Big\{-1,0\Big\}.
\end{eqnarray*}
This shows that for each $i\in\{1,2,3\}$, $\mathcal{D}_i^{\prime}$ is a complete set of coset representatives for $\mathbb{Z}/2\mathbb{Z}$ and that $K=\bigcup\limits_{i=1}^3 K_i^{\prime}$ is an integral self-affine $\mathbb{Z}$-tiling set with $3$ prototiles. $\Box$

\medskip

\begin{rem}\label{r-3-1}
Let $K=\bigcup\limits_{j=1}^M K_j$ be an integral self-affine $\mathbb{Z}^n$-tiling set with $M$ prototiles. If $\Gamma_{ij_1}=\Gamma_{ij_2}$ for any $i\in S$, then $j_1\sim j_2$. But the converse is not necessarily true as shown in the next example.
\end{rem}

\bigskip

\begin{ex}\label{counter-3-1}
In dimension one, consider the set $K=[-\frac{3}{4},\frac{1}{4}]$ associated with $B=-3$.
\end{ex}

\medskip

Obviously, $K$ is a $\mathbb{Z}$-tiling set. For this example, the set $K$ can be represented as a union of many different kinds of prototiles.
The simplest form is the representation of $K$ as an integral self-affine tile, since
\begin{eqnarray*}
BK=[-\frac{3}{4},\frac{9}{4}]=K\cup(K+1)\cup(K+2).
\end{eqnarray*}
On the other hand, we can also let $K_1=[-\frac{3}{4},-\frac{1}{4}]$ and $K_2=[-\frac{1}{4},\frac{1}{4}]$, then
\begin{eqnarray*}
&& BK_1 = [\frac{3}{4},\frac{9}{4}] = (K_1+2)\cup (K_2+\{1,2\})\Longrightarrow \Gamma_{11} = \{2\}, \Gamma_{12} = \{1,2\}, \\
&&BK_2 = [-\frac{3}{4},\frac{3}{4}] = (K_1+\{0,1\}) \cup K_2\Longrightarrow \Gamma_{21} = \{0,1\}, \Gamma_{22} = \{0\},
\end{eqnarray*}
and $$\mathcal{D}_1=\bigcup\limits_{i=1}^2\Gamma_{i1}=\{0,1,2\}=\bigcup\limits_{i=1}^2\Gamma_{i2}=\mathcal{D}_2.$$
This shows that $K=\bigcup\limits_{i=1}^2K_i$ is an integral self-affine $\mathbb{Z}$-tiling set with $2$ prototiles.
Hence, $K$ is not only an integral self-affine $\mathbb{Z}$-tiling set with one prototiles,
but an integral self-affine $\mathbb{Z}$-tiling set with two prototiles.
Corollary \ref{c-2-1} shows that $\mathcal{D}_1^m=\mathcal{D}_2^m$ for any $m\ge 1$, i.e. $1\sim 2$.
However, $\Gamma_{i1}\ne \Gamma_{i2}$ for any $i=1,2$. $\Box$

\medskip

\begin{ex}\label{ex-3-2}
In dimension two, consider the set $K=H\cup(-H)\cup K^{\prime}$ associated with the matrix $B=\text{\scriptsize$\left(\begin{matrix}-1&1\\-3&1\end{matrix}\right)$}$, where
$$H=conv\Big\{\text{\scriptsize$\left(\begin{matrix}1/3\\1\end{matrix}\right)$},\text{\scriptsize$\left(\begin{matrix}2/3\\1\end{matrix}\right)$},\text{\scriptsize$\left(\begin{matrix}1/2\\1/2\end{matrix}\right)$},\text{\scriptsize$\left(\begin{matrix}1/6\\1/2\end{matrix}\right)$}\Big\} \ and \
K^{\prime}=conv\Big\{\text{\scriptsize$\left(\begin{matrix}-1/6\\1/2\end{matrix}\right)$},\text{\scriptsize$\left(\begin{matrix}1/2\\1/2\end{matrix}\right)$},\text{\scriptsize$\left(\begin{matrix}1/6\\-1/2\end{matrix}\right)$},\text{\scriptsize$\left(\begin{matrix}-1/2\\-1/2\end{matrix}\right)$}\Big\},$$
where $conv(E)$ denotes the convex hull of $E$.
\end{ex}

\medskip

It is easy to see that $K$ is a $\mathbb{Z}^2$-tiling set. The sets $K$ and $BK$ are depicted in Figure \ref{fig1.fig}. Clearly, we can divide $K$ into six pieces $\{K_j\}_{j=1}^6$ with $K_1=H, K_2=E, K_3=F, K_4=-E, K_5=-F, K_6=-H$, where
$$E=conv\Big\{\text{\scriptsize$\left(\begin{matrix}1/3\\0\end{matrix}\right)$},\text{\scriptsize$\left(\begin{matrix}0\\0\end{matrix}\right)$},\text{\scriptsize$\left(\begin{matrix}1/2\\1/2\end{matrix}\right)$},\text{\scriptsize$\left(\begin{matrix}1/6\\1/2\end{matrix}\right)$}\Big\} \ and \
F=conv\Big\{\text{\scriptsize$\left(\begin{matrix}-1/6\\ 1/2\end{matrix}\right)$},\text{\scriptsize$\left(\begin{matrix}1/6\\1/2\end{matrix}\right)$},\text{\scriptsize$\left(\begin{matrix}-1/3\\ 0\end{matrix}\right)$},\text{\scriptsize$\left(\begin{matrix}0\\0\end{matrix}\right)$}\Big\}.$$
Moreover, we have (see Figure \ref{fig1.fig})

\begin{figure}[hb!]
\centering
\hspace{-0.5in}
\includegraphics[trim=0mm 0mm 0mm 15mm, clip, scale=0.6,angle=0]{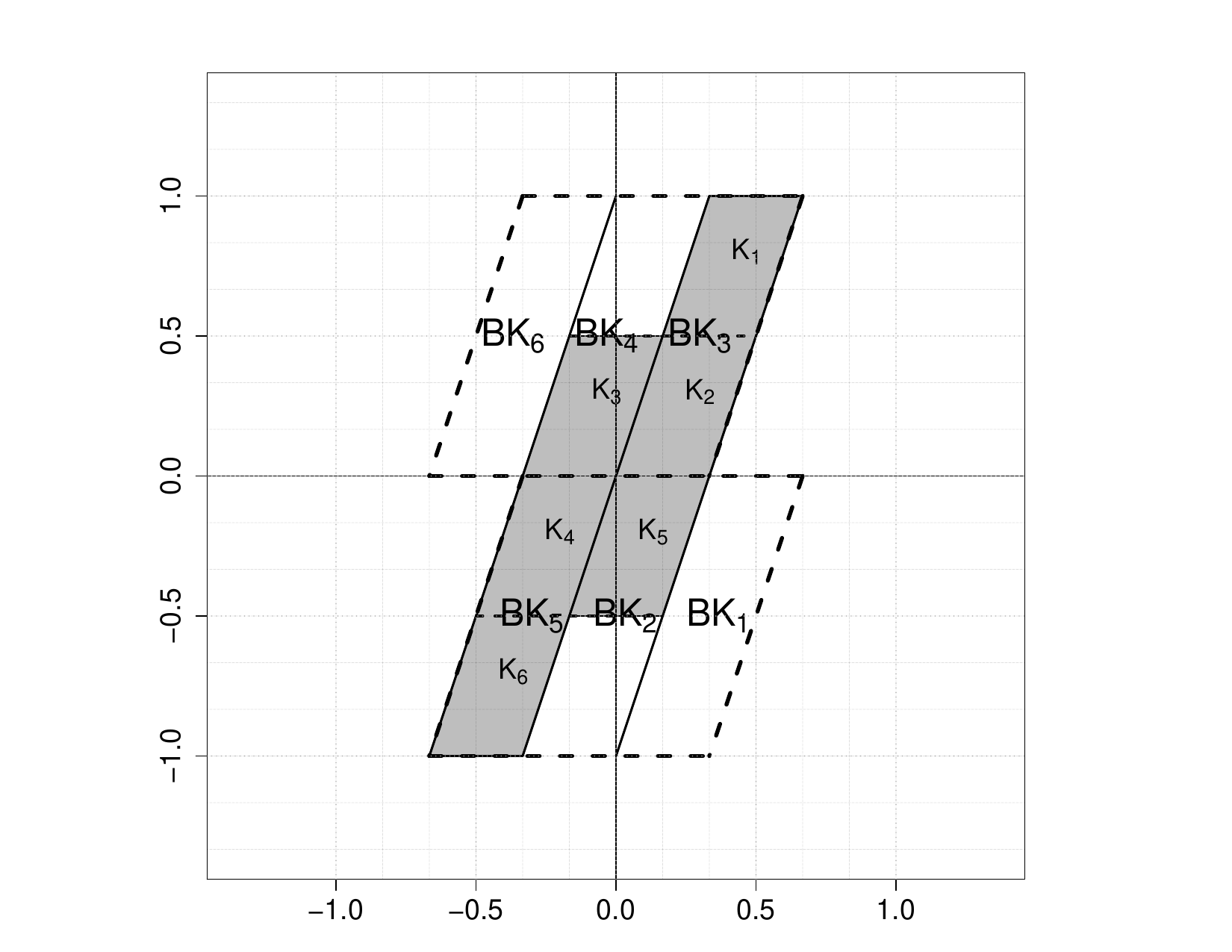}
\caption[Figure 1]{$K_i$ and its $B$-dilation $BK_i$, \ i=1,2,3,4}
\label{fig1.fig}
\end{figure}

\begin{eqnarray*}
&& BK_1=(K_1+\text{\scriptsize$\left(\begin{matrix}0\\-1\end{matrix}\right)$})\cup (K_2+\text{\scriptsize$\left(\begin{matrix}0\\-1\end{matrix}\right)$}), \ BK_2=(K_3+\text{\scriptsize$\left(\begin{matrix}0\\-1\end{matrix}\right)$})\cup K_5, \ BK_3=K_1\cup K_2,\\
&& BK_4=K_3\cup (K_5+\text{\scriptsize$\left(\begin{matrix}0\\1\end{matrix}\right)$}), \ \ BK_5=K_4\cup K_6, \ \ BK_6=(K_4+\text{\scriptsize$\left(\begin{matrix}0\\1\end{matrix}\right)$})\cup (K_6+\text{\scriptsize$\left(\begin{matrix}0\\1\end{matrix}\right)$}),
\end{eqnarray*}
which implies that $\{K_j\}_{j=1}^6$ is an integral self-affine collection. Therefore, $K$ is an integral self-affine $\mathbb{Z}^n$-tiling set with $6$ prototiles. However, this representation is not in its simplest form. We will use the method introduced in Section 2 to represent the set $K$ in its simplest form. At the first step, we get a partition $C_1^{\prime}=\{K_{1i}\}_{i=1}^2$ of $K$ by computing (\ref{e-2-e}) for $m=1$ (see Figure \ref{fig2.fig}).

\begin{figure}[htpb]

 \begin{center}
 \begin{tabular}{cc}
    \hspace{-1in} \includegraphics[trim=0mm 15mm 0mm 14mm, clip, scale=0.4,angle=0]{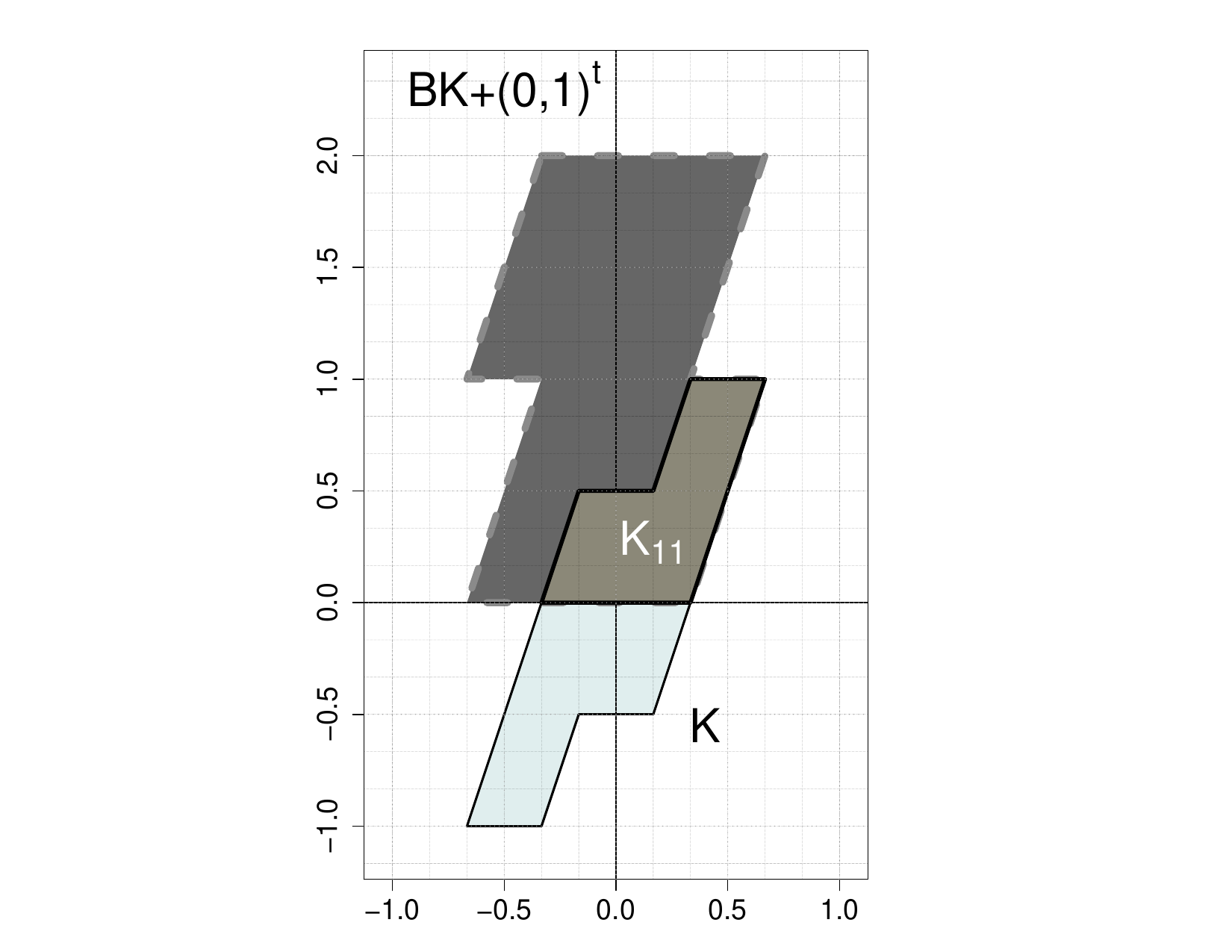} &
    \hspace{-1in} \includegraphics[trim=0mm 15mm 0mm 14mm, clip, scale=0.4,angle=0]{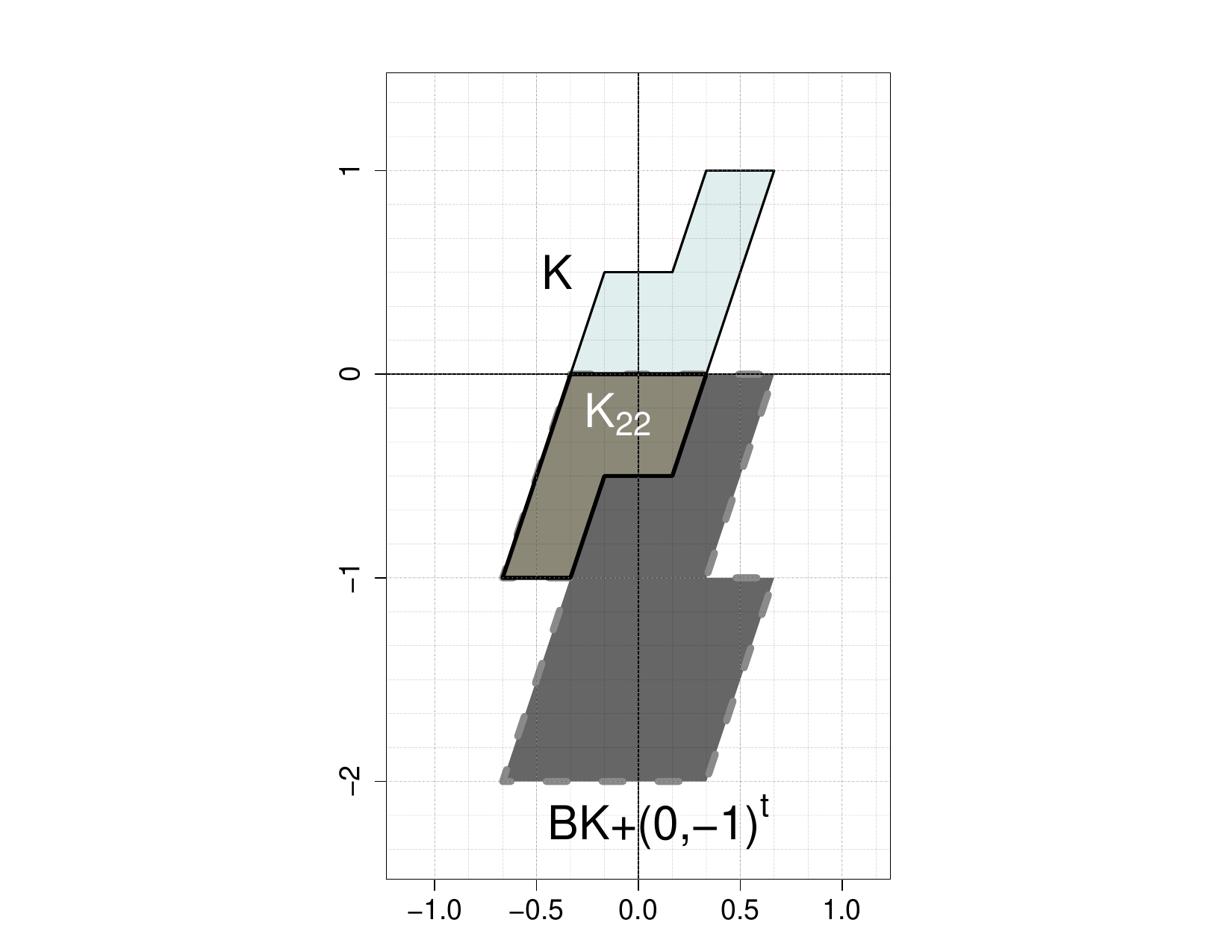}\\
  (a).$K\cap (BK+\text{\scriptsize$\left(\begin{matrix}0\\1\end{matrix}\right)$})$   &  (b).$K\cap (BK+\text{\scriptsize$\left(\begin{matrix}0\\-1\end{matrix}\right)$})$\\
  \end{tabular}
 \caption{The intersection of $K$ and integer translations of $BK$.}
\label{fig2.fig}
 \end{center}
\end{figure}

\begin{eqnarray*}
K_{11}=(BK+\text{\scriptsize$\left(\begin{matrix}0\\1\end{matrix}\right)$})\cap K=K_1\cup K_2\cup K_3, \
K_{12}=(BK+\text{\scriptsize$\left(\begin{matrix}0\\-1\end{matrix}\right)$})\cap K=K_4\cup K_5\cup K_6.
\end{eqnarray*}
It is easy to check that $\{K_{1i}\}_{i=1}^2$ is not an integral self-affine collection. Thus, we need to decompose $K_{1i}, \ i=1,2$ further using (\ref{e-2-e}) (see Figure \ref{fig3.fig}) and we have
\begin{eqnarray*}
&& K_{21}=(B^2K+\text{\scriptsize$\left(\begin{matrix}1\\1\end{matrix}\right)$})\cap K_{11}=K_1\cup K_2, \
K_{22}=(B^2K+\text{\scriptsize$\left(\begin{matrix}-1\\-1\end{matrix}\right)$})\cap K_{11}=K_3, \\
&& K_{23}=(B^2K+\text{\scriptsize$\left(\begin{matrix}-1\\-1\end{matrix}\right)$})\cap K_{12}=K_4\cup K_6, \
K_{24}=(B^2K+\text{\scriptsize$\left(\begin{matrix}1\\1\end{matrix}\right)$})\cap K_{12}=K_5.
\end{eqnarray*}
Furthermore, $\{K_{2i}\}_{i=1}^4$ is an integral self-affine collection since
\begin{eqnarray*}
&& BK_{21}=(K_{21}+\text{\scriptsize$\left(\begin{matrix}0\\-1\end{matrix}\right)$})\cup (K_{21}+\text{\scriptsize$\left(\begin{matrix}0\\-1\end{matrix}\right)$})\cup K_{24}, \ BK_{22}=K_{21}, \\
&& BK_{23}=(K_{23}+\text{\scriptsize$\left(\begin{matrix}0\\1\end{matrix}\right)$})\cup K_{22}\cup (K_{24}+\text{\scriptsize$\left(\begin{matrix}0\\1\end{matrix}\right)$}), \ \ \ BK_{24}=K_{23}.
\end{eqnarray*}

\begin{figure}[htpb]
 \begin{center}
  \begin{tabular}{cc}
 \hspace{-1in} \includegraphics[trim=0mm 8mm 0mm 22mm, clip, scale=0.4,angle=0]{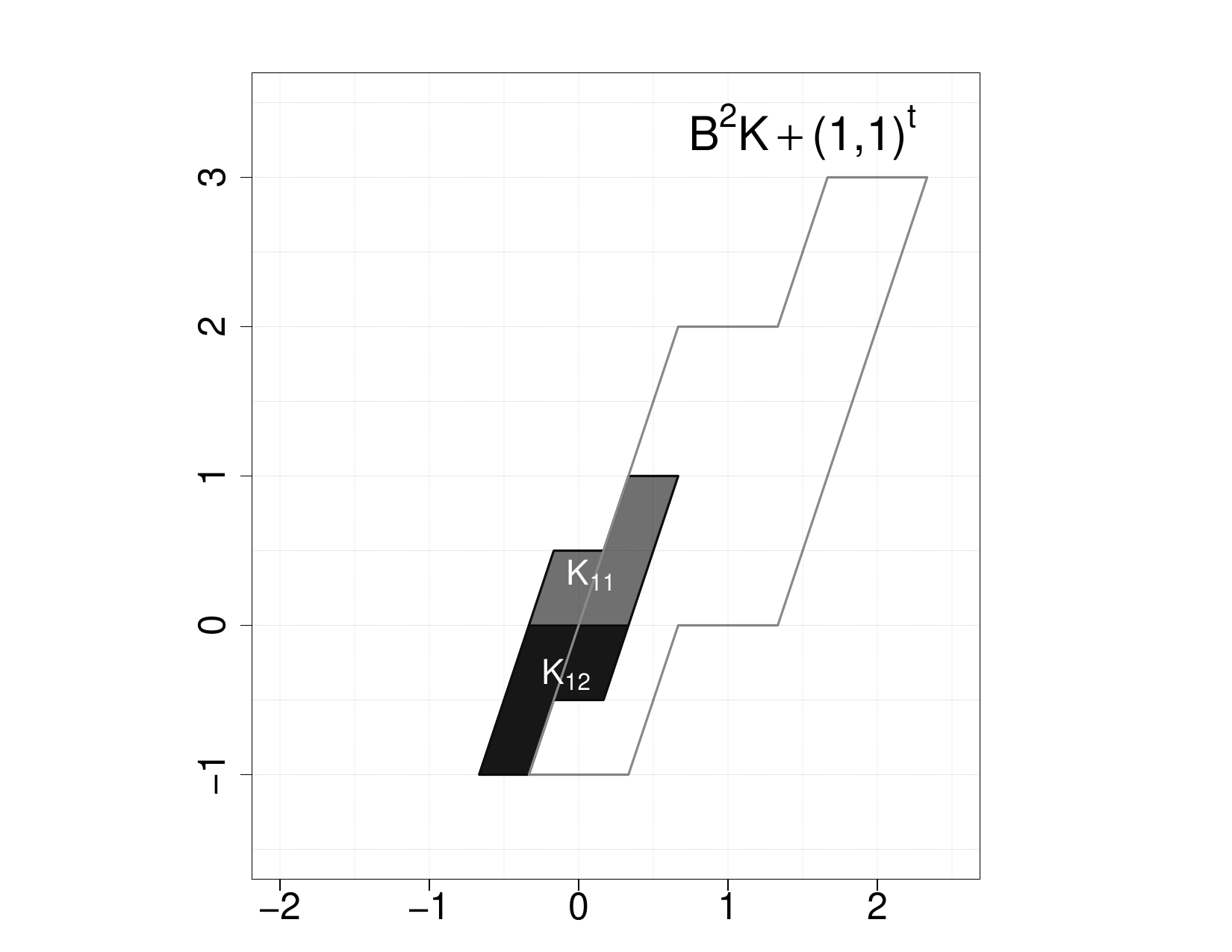} &
 \hspace{-1in} \includegraphics[trim=0mm 8mm 0mm 22mm, clip, scale=0.4,angle=0]{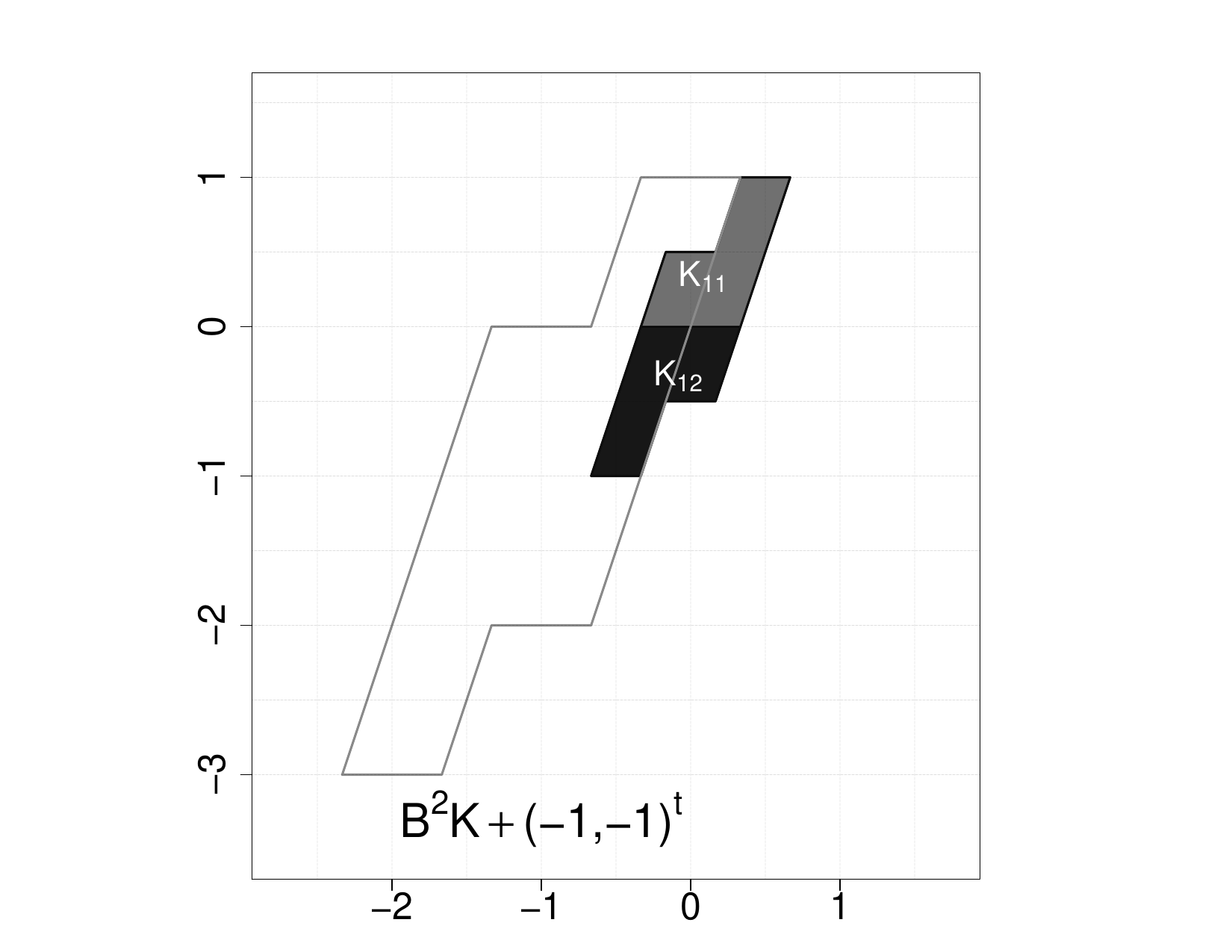}\\
(a) $K_{1i}\cap (B^2K+\text{\scriptsize$\left(\begin{matrix}1\\1\end{matrix}\right)$}), \ i=1,2$   &
(b) $K_{1i}\cap (B^2K+\text{\scriptsize$\left(\begin{matrix}-1\\-1\end{matrix}\right)$}), \ i=1,2$ \\
  \end{tabular}
 \end{center}
\caption{The intersection of $K_{1i},\ i=1,2$ and integer translations of $B^2K$}\label{fig3.fig}
\end{figure}

Therefore, $K=\bigcup\limits_{i=1}^4 K_{2i}$ is an integral self-affine $\mathbb{Z}^2$-tiling set with $4$ prototiles and this representation is in its simplest form. $\Box$

\bigskip

It has been shown in \cite{GHA} that the theory of interal self-affine multi-tiles is closely related to the theory of wavelets. We also considered in \cite{FG} the problem of constructing wavelet sets using integral self-affine multi-tiles and gave a sufficient condition for an integral self-affine $\mathbb{Z}^n$-tiling set with multi-prototiles to be a scaling set. The example below shows that some wavelet sets cannot be constructed using integral self-affine $\mathbb{Z}^n$-tiling sets with multi-prototiles as was done in \cite{FG}.

\bigskip


\begin{ex}
In dimension one, consider the set $K=[-a,1-a]$ where $0<a<1$ associated with $B=2$. Then $K$ is an integral self-affine multi-tile if and only if $a\in\mathbb{Q}$.
\end{ex}

\medskip

\begin{proof}
It has been proved in \cite{FGA} that $K$ is a $2$-dilation MRA scaling set and the set $Q:=2K\setminus K$ is a $2$-dilation MRA wavelet set.
Obviously, $K$ is a $\mathbb{Z}$-tiling set. We will divide into two cases to prove our claim.

\noindent Case 1: $a\in\mathbb{Q}$. Then
$a=\frac{p}{q},\ \rm{for \ some} \ p,\ q\in\mathbb{N}$ and $(p,q)=1,\ p<q$ since $0<a<1$. In this case,
$K=[-\frac{p}{q},1-\frac{p}{q}]=[-\frac{p}{q},\frac{q-p}{q}]$. Let
$$
K_i=[-\frac{p-i+1}{q},-\frac{p-i}{q}], \ i=1,\dotsc, q.$$
Then $K=\bigcup\limits_{i=1}^{q}K_i$, where the union is measure disjoint and we have
$$\begin{array}{crl}
BK_i&=&2[-\frac{p-i+1}{q},-\frac{p-i}{q}]
=[-\frac{2p-2i+2}{q},-\frac{2p-2i}{q}]\\
&=&[-\frac{2(p-i+1)}{q},-\frac{2p-2i+1}{q}]\cup[-\frac{2p-2i+1}{q},-\frac{2(p-i)}{q}].\end{array}$$
Note that $$-\frac{2(p-i+1)}{q}\in\Big\{-\frac{p}{q},-\frac{p-1}{q},...,-\frac{1}{q},0,\frac{1}{q},...,\frac{q-p-1}{q}\Big\}+\mathbb{Z},$$ and
$$-\frac{2(p-i)}{q}\in\Big\{-\frac{p-1}{q},-\frac{p-2}{q},...,0,\frac{1}{q},...,\frac{q-p}{q}\Big\}+\mathbb{Z}.$$
Thus, we have $BK_i=(K_{j_1}+\ell_1)\cup(K_{j_2}+\ell_2)$ for some
$\ell_1,\ \ell_2\in\mathbb{Z}$, \ \rm{and} \ $j_1,\ j_2\in\{1,2,...,q\}$.
This proves that the collection $\{K_i\}_{i=1}^q$ is an integral self-affine collection.
Therefore, $K = \bigcup\limits_{i=1}^{q}K_i$ is an integral self-affine ${\mathbb Z}$-tiling set with $q$ prototiles.

\noindent Case 2: $a\notin\mathbb{Q}$.
 Let $\lceil a \rceil$ denote the minimum integer not less than $a$. Note that for any $m\ge 0$, $(2^mK + \lceil 2^ma - a \rceil ) \cap K \ncong \emptyset$ since $-a \le \lceil 2^ma - a \rceil  - 2^ma < 1-a$, which
implies that the collection of each step in Section 2 must contain the sets with left endpoints $ \lceil 2^ma - a \rceil  - 2^ma $. Then it is infinite since $a\notin\mathbb{Q}$. This shows that the iteration
 will go on infinitely and thus $K=[-a,1-a]$ is not an integral self-affine multi-tile.
\end{proof}


\end{document}